\numberwithin{equation}{section}
\newtheorem{theorem}{Theorem}[section]
\newtheorem{thm}[theorem]{Theorem}
\newtheorem{lem}[theorem]{Lemma}
\def\\{\cr}
\def\({\left(}
\def\){\right)}
\def\[{\left[}
\def\]{\right]}
\def\<{\langle}
\def\>{\rangle}
\def\N{\mathbb{N}}
\def\Z{\mathbb{Z}}
\def\Q{\mathbb{Q}}
\def\notdivides{\mathrel{\kern-3pt\not\!\kern3.5pt\bigm|}}
\begin{document}


\title{\Large \textbf{Evaluationally coprime linear polynomials}}
\author{%
{\sc RANDELL HEYMAN}  \\
School of Mathematics and Statistics,\\ University of New South Wales \\
Sydney, Australia\\
{\tt randell@unsw.edu.au}
}

\maketitle

\begin{abstract}
Two polynomials, $f,g \in \mathbb{Z}[x]$ are $\emph{evaluationally coprime at x}$ if
\newline
$\gcd(f(x),g(x))=1$. We give necessary and sufficient conditions for two such linear polynomials to have a positive proportion of evaluated coprime values.
\end{abstract}

Keywords: relatively prime, coprime, linear polynomials
\newline

AMS Classification: 11C08

\noindent
\section{Introduction}
A natural extension of the greatest common divisor of two polynomials is to consider the greatest common divisor of the evaluation of the two polynomials at a particular value. This then leads to
the concept of polynomials $f,g \in \mathbb{Z}[x]$ that are evaluationally coprime. That is, $\gcd(f(x),g(x))=1$ for all  $x \in \Z$. We can extend this line of enquiry to tuples of evaluationally pairwise coprime polynomials; that is, $f_1,\ldots,f_n$ such that for any $1 \le i< j\le n$ we have
$(f_i(x),f_j(x))=1$ for all $x \in \mathbb{Z}$. 

Denote the greatest common divisor of integers $a_1,\ldots a_n$ by $(a_1,\ldots,a_n)$. Recently, Knox, McDonald and Mitchell \cite{Kno} examined pairs of polynomials $f,g \in \mathbb{Z}[x]$ that have a greatest common divisor equal to 1, and have a greatest common divisor equal to 1 when evaluated at every integer value.
In \cite[Corollary 3.5]{Kno}  necessary and sufficient conditions are given for two primitive linear polynomials to exhibit both of these conditions.
The main result of the present paper, Theorem \ref{main} below, gives necessary and sufficient conditions for the less demanding result that a positive proportion of evaluated values are coprime. Unlike the proof in \cite{Kno}, the proof of Theorem \ref{main} does not use the resultant.
\begin{thm}
\label{main}
Suppose $f(x)=ax+b, ~g(x)=cx+d,\quad  a,b,c,d\in \mathbb{Z},~ a,c \ne 0.$
Then
$$\liminf_{N \rightarrow \infty} \frac{1}{2N+1}\big|\{x : (f(x),g(x))=1, x=-N..N\}\big|>0$$
if, and only if,
$$(a,b,c,d)=1~\textrm{and}~ad \ne bc.$$
\end{thm}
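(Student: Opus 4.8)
The plan is to treat the two necessity directions directly and to handle sufficiency by a local (prime‑by‑prime) analysis that is made finite by the hypothesis $ad\neq bc$. Throughout write $R=ad-bc$, the resultant of the two forms up to sign.

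First I would dispose of necessity. If $m:=(a,b,c,d)>1$, then $m$ divides both $ax+b$ and $cx+d$ for \emph{every} $x$, so $(f(x),g(x))\ge m>1$ identically, the counted set is empty, and the liminf is $0$. If instead $ad=bc$, I would set $e=(a,c)$, write $a=ea'$, $c=ec'$ with $(a',c')=1$, and observe that $ad=bc$ forces $a'd=bc'$, whence $a'\mid b$ and $b=a's$, $d=c's$ for some $s\in\Z$. This yields the factorizations $f(x)=a'(ex+s)$ and $g(x)=c'(ex+s)$, so $(ex+s)\mid(f(x),g(x))$. Since $a\neq0$ gives $e\neq0$, the integer $|ex+s|$ exceeds $1$ for all but at most two values of $x$, so the counted set is bounded in size and the liminf is again $0$.

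For sufficiency, assume $(a,b,c,d)=1$ and $R\neq0$. The key step is to control, for each prime $p$, the ``bad'' residue set $B_p=\{x\bmod p:\ p\mid f(x)\ \text{and}\ p\mid g(x)\}$, and to show that $|B_p|\le1$ and that $B_p=\emptyset$ unless $p\mid R$. This follows from a short case distinction. If $p\nmid a$ (or, symmetrically, $p\nmid c$), then one of the congruences pins $x$ to a single residue class, and compatibility with the other congruence forces $ad\equiv bc\pmod p$, i.e.\ $p\mid R$; in particular $|B_p|\le1$. If $p\mid a$ and $p\mid c$, then $f$ and $g$ are constant modulo $p$, and $(a,b,c,d)=1$ prevents both constants from vanishing, so $B_p=\emptyset$.

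Granting this, $(f(x),g(x))=1$ holds precisely when $x$ avoids the single class $B_p$ for each of the finitely many primes $p\mid R$. With $S=\{p\mid R:\ B_p\neq\emptyset\}$ and $M=\prod_{p\in S}p$, the admissible $x$ form a union of $\prod_{p\in S}(p-|B_p|)$ residue classes modulo $M$, so the count over $\{-N,\dots,N\}$ is $\frac{2N+1}{M}\prod_{p\in S}(p-|B_p|)+O(M)$ and the proportion tends to $\prod_{p\in S}\bigl(1-|B_p|/p\bigr)$. Each factor is at least $1/2$, so this limit is a positive constant and the liminf is positive. I expect the main obstacle to be precisely the local case analysis establishing $|B_p|\le1$ and the vanishing of $B_p$ off the divisors of $R$; once that is in place the reduction to a finite count via the Chinese Remainder Theorem is routine. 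Conceptually, $(a,b,c,d)=1$ is what kills the degenerate case $p\mid(a,c)$, while $ad\neq bc$ is exactly what keeps $S$ finite and thus the density bounded away from $0$.
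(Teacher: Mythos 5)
Your proof is correct, but it takes a genuinely different route from the paper's. The paper runs a Euclidean-type GCD algorithm on the pair of linear forms, reducing $(ax+b,cx+d)$ to $(ux+v,s)$ with $s\neq 0$ exactly when $ad\neq bc$; periodicity of $x\mapsto(xu+v,s)$ modulo $s$ then reduces sufficiency to exhibiting a single $x$ with $(xu+v,s)=1$, which is found by a case analysis invoking Dirichlet's theorem on primes in arithmetic progressions twice --- and the paper's introduction explicitly advertises that its proof avoids the resultant. You work locally instead: $B_p\neq\emptyset$ forces $p\mid ad-bc$ (your $R$ is, up to sign, exactly the resultant the paper avoids), the case $p\mid(a,c)$ is killed by $(a,b,c,d)=1$, and $|B_p|\le 1$ always; the Chinese Remainder Theorem then exhibits the good $x$ as a nonempty union of residue classes modulo $M=\prod_{p\in S}p$. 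Your argument is both more elementary (no Dirichlet) and yields more: the liminf is an actual limit, equal to the explicit positive density $\prod_{p\in S}\bigl(1-|B_p|/p\bigr)$, whereas the paper only obtains the lower bound $1/s$. Your handling of the degenerate case $ad=bc$ via the factorization $f(x)=a'(ex+s)$, $g(x)=c'(ex+s)$ likewise replaces the paper's algorithmic/monotonicity argument with a direct common factor. One microscopic slip: $|ex+s|\le 1$ can occur for up to three values of $x$ (namely $ex+s\in\{-1,0,1\}$), not two; but since $\gcd(f(x),g(x))=1$ forces $ex+s=\pm1$, the counted set still has at most two elements, and boundedness is all your argument needs.
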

\section{Preparation}
We use the following GCD algorithm (`the algorithm').
Given two polynomials $a_1x+b_1,~ a_2x+b_2 \in \Z[x]$ with $a_1 \ge a_2>0$ we let
\begin{align}
\label{euclid}
a_ix+b_i&=e_{i+1}(a_{i+1}x+b_{i+1})+a_{i+2}x+b_{i+2},~i=1,2,\ldots,
\end{align}
where $e_{i+1}$ is the largest integer such that $e_{i+1}a_{i+1}\le a_i.$
The algorithm terminates when $a_{i+2}=0$. Let $m$ be this value $i+2$. So the algorithm terminates when $a_m=0$.
We note that for any $x\in \Z$ and for any  $1\le i,j\le m-1$ we have
$$(a_ix+b_i,a_{i+1}x+b_{i+1})=(a_jx+b_j,a_{j+1}x+b_{j+1}).$$
We simplify the last part of the algorithm by denoting $a_{m-1}=u,~b_{m-1}=v$ and $b_{m}=s$. So we can write
\begin{align}
\label{abcd=uvs}
(ax+b,cx+d)=(ux+v,s).
\end{align}
To prove Theorem \ref{main}, we require three simple lemmas, below.
\begin{lem}
\label{period}
Let $u,v,s \in \Z$. We have
$(xu+v,s)=((x+s)u+v,s)$ for all $x \in \Z$.
\end{lem}
\begin{proof}
Fix  $x \in \Z$. Let $g_1=(xu+v,s), g_2=((x+s)u+v,s)$.
We have $g_1|su$ so $g_1|(x+s)u+v$; hence $g_1|g_2$.
Similarly, $g_2|su$ so $g_2|xu+v$; hence $g_2|g_1$.
So $g_1=g_2$ as required.
\end{proof}

\begin{lem}
\label{u=(a,c)}
Suppose by comparing the first and last line of the algorithm we have, as shown in \eqref{abcd=uvs},
\begin{align}
\label{abcduvs}
(ax+b,cx+d)=(ux+v,s).
\end{align}
Then $(a,c)=u$ and $(b,d)=(v,s).$
\end{lem}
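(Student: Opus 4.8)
The plan is to track both the leading-coefficient sequence $(a_i)$ and the constant-term sequence $(b_i)$ through the recurrence \eqref{euclid} simultaneously, and to exploit the single elementary fact that subtracting an integer multiple of one argument from the other leaves a greatest common divisor unchanged.

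First I would read off from \eqref{euclid} the two scalar recurrences
\begin{align*}
a_{i+2}&=a_i-e_{i+1}a_{i+1},& b_{i+2}&=b_i-e_{i+1}b_{i+1},
\end{align*}
valid for $i=1,\dots,m-2$, noting crucially that the \emph{same} quotient $e_{i+1}$ governs both. For the leading coefficients the choice of $e_{i+1}$ as the largest integer with $e_{i+1}a_{i+1}\le a_i$ makes $a_{i+2}$ the usual remainder of $a_i$ on division by $a_{i+1}$, so the sequence $a_1=a,\,a_2=c,\,a_3,\dots$ is precisely the classical Euclidean algorithm applied to $a$ and $c$.

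Next I would prove by induction on $i$ that $(a_i,a_{i+1})=(a,c)$ and $(b_i,b_{i+1})=(b,d)$ for every $1\le i\le m-1$. The base case $i=1$ is the definition $a_1=a,\,a_2=c,\,b_1=b,\,b_2=d$. For the inductive step I would use that for any integers $p,q$ and any integer $e$ one has $(q,p-eq)=(q,p)=(p,q)$; applying this with $(p,q)=(a_i,a_{i+1})$ and with $(p,q)=(b_i,b_{i+1})$ gives $(a_{i+1},a_{i+2})=(a_i,a_{i+1})$ and $(b_{i+1},b_{i+2})=(b_i,b_{i+1})$, which closes the induction. The one point I expect to be the main obstacle is precisely the bookkeeping here: the $b$-sequence is \emph{not} itself an independent Euclidean run, since the multipliers $e_{i+1}$ are dictated entirely by the $a$'s. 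What saves the argument is that the invariance $(q,p-eq)=(p,q)$ holds for an \emph{arbitrary} multiple $e$, so it applies verbatim to the constant terms even though their quotients are imposed from outside.

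Finally I would evaluate at the terminal index $i=m-1$. Recalling the abbreviations $a_{m-1}=u$, $b_{m-1}=v$, $b_m=s$ and the stopping condition $a_m=0$, the induction yields $(a,c)=(a_{m-1},a_m)=(u,0)=u$ (using $u>0$, as it is the last nonzero term of a strictly descending sequence of positive leading coefficients) and $(b,d)=(b_{m-1},b_m)=(v,s)$, which is exactly the assertion. I would close with the remark that, since every quantity in sight is a greatest common divisor, any initial normalization of signs needed to meet the algorithm's requirement $a_1\ge a_2>0$ has no effect on the conclusion.
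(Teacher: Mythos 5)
Your proof is correct, and for the first claim $(a,c)=u$ it is essentially the paper's argument: extract the scalar recurrence $a_{i+2}=a_i-e_{i+1}a_{i+1}$ from \eqref{euclid} (the paper does this by evaluating at $x=0$ and $x=1$ and subtracting, you by comparing coefficients --- the same computation), observe it is Euclid's algorithm on $a,c$, and conclude $(a,c)=(a_{m-1},a_m)=(u,0)=u$. Where you genuinely diverge is the second claim: the paper dispatches $(b,d)=(v,s)$ in one line by substituting $x=0$ directly into the pointwise gcd identity \eqref{abcduvs}, whereas you re-run the gcd-invariance induction on the constant-term sequence, using that $(q,\,p-eq)=(p,q)$ holds for an \emph{arbitrary} integer multiplier $e$, so the $b_i$ inherit the invariant $(b_i,b_{i+1})=(b,d)$ even though the quotients $e_{i+1}$ are dictated by the $a$'s --- a point you rightly flag as the crux. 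Your route is slightly longer but more self-contained: it needs only the coefficient recurrences, never the fact that the gcd equality \eqref{abcduvs} holds at every integer $x$, and it treats both sequences by one uniform mechanism. The paper's route buys brevity at the cost of invoking that evaluational identity. Two small credits to your version: you justify $(u,0)=u$ by noting $u>0$ (the last nonzero term of the strictly decreasing nonnegative remainder sequence), which the paper uses silently in writing $(a_{m-1},0)=a_{m-1}$, and your closing remark about sign normalization addresses the hypothesis $a_1\ge a_2>0$ that the paper's algorithm assumes without comment.
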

\begin{proof}
Recalling the algorithm, we have
$$a_ix+b_i=e_{i+1}(a_{i+1}x+b_{i+1})+a_{i+2}x+b_{i+2},~i=1,2,\ldots m-2.$$
Setting $x=0$ and then $x=1$ we have
$$b_i=e_{i+1}b_{i+1}+b_{i+2},\quad a_i+b_i=e_{i+1}(a_{i+1}+b_{i+1})+a_{i+2}+b_{i+2}$$
respectively. Subtracting equations we obtain
$$a_{i}=e_{i+1}a_{i+1}+a_{i+2},$$
where $e_{i+1}$ is the biggest integer such that $e_{i+1}a_{i+1}\le a_{i}$. This is Euclid's algorithm for integers. Thus $(a_{i},a_{i+1})=(a_{i+1},a_{i+2})$.
Since this applies for any $i$ it follows that $(a_1,a_2)=(a_{m-1},0)=a_{m-1}$. Letting $a_1=a,~a_2=c$ and recalling that $a_{m-1}=u$ concludes the proof that $(a,c)=u$.
Setting $x=0$ in \eqref{abcduvs} yields $(b,d)=(v,s)$ which completes the proof.
\end{proof}

\begin{lem}
\label{(a,b,c,d)}
Let $a,b,c,d \in \Z$. We have
$$(a,b,c,d)=((a,b),(c,d)).$$
\end{lem}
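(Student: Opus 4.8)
The plan is to prove the identity by showing that each side divides the other, invoking the defining property of the greatest common divisor: for integers $m,n,e$, one has $e \mid (m,n)$ if and only if $e \mid m$ and $e \mid n$, and moreover $(m,n) \mid m$ and $(m,n) \mid n$. Since the greatest common divisor is taken to be nonnegative, establishing mutual divisibility forces equality, and no appeal to prime factorisations is needed.

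First I would set $D=(a,b,c,d)$ and $E=((a,b),(c,d))$. To see that $D \Div E$, note that $D$ divides each of $a,b,c,d$; in particular $D \Div a$ and $D \Div b$, so $D \Div (a,b)$, and likewise $D \Div (c,d)$. A common divisor of $(a,b)$ and $(c,d)$ divides their greatest common divisor, hence $D \Div E$.

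Conversely, to obtain $E \Div D$, observe that by definition $E \Div (a,b)$ and $E \Div (c,d)$. Combining $E \Div (a,b)$ with $(a,b)\Div a$ and $(a,b)\Div b$ gives $E \Div a$ and $E \Div b$ by transitivity, and symmetrically $E \Div c$ and $E \Div d$. Thus $E$ is a common divisor of all four integers, so $E \Div (a,b,c,d)=D$.

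Since $D$ and $E$ are both nonnegative and each divides the other, they coincide, which is the claim. I expect no genuine obstacle here: the only point meriting a moment's attention is the degenerate case $a=b=c=d=0$, where both sides equal $0$ under the convention $(0,0)=0$, so the identity persists. The argument is simply the associativity of the gcd operation phrased through divisibility, and it extends verbatim to regrouping more than four arguments should that ever be required.
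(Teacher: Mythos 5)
Your proof is correct and follows essentially the same route as the paper: both establish the identity by showing each side divides the other via the universal property of the gcd. Your version merely spells out the converse direction and the degenerate case $a=b=c=d=0$ explicitly, where the paper compresses these into ``similarly''.
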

\begin{proof}
Let $g_1=(a,b,c,d), g_2=((a,c),(b,d))$.
We have $g_1$ divides both $(a,c)$ and $(b,d)$, so $g_1|g_2$.
Similarly, $g_2|g_1$.
So $g_1=g_2$ as required.
\end{proof}

\section{Proof of theorem}

Suppose $f(x)=ax+b, ~g(x)=cx+d,\quad  a,b,c,d\in \mathbb{Z},~a,c \ne 0.$
Without loss of generality we will assume that $a \ge c$.

To prove sufficiency suppose firstly that $(a,b,c,d)=j\ne 1$. Then for all $x \in \Z$ we have
$j|(ax+b)$ and $j|(cx+d),$
which implies that
$j|(ax+b,cx+d),$ and so
$(ax+b,cx+d)>1.$
Therefore
$$\liminf_{N \rightarrow \infty} \frac{1}{2N+1}\big|\{x : (f(x),g(x))=1, x=-N..N\}\big|=0.$$

Alternately, if $ad = bc$ then, since $a,c \ne 0$, we have $a/c=b/d$ . Thus $a=kc, b=kd$ for some $k \in \Q, k\ge 1$.
So $f(x)=kg(x)$ and the termination line of the algorithm will be
$$(f(x),g(x))=(ux+v,0),$$
for some  $u \in \N, v \in \Z.$

Since $(xu+v,0)=xu+v$ for all $x \in \Z$, the sequence
$$(u+v,0), (2u+v,0),\ldots,$$
is monotonic.  It follows that
$$\liminf_{N \rightarrow \infty} \frac{1}{2N+1}\big|\{x : (f(x),g(x))=1, x=-N..N\}\big|=0.$$

To prove necessity suppose that
$$(a,b,c,d)=1~\textrm{and}~ad \ne bc.$$ Since $ad \ne bc$ then, as argued above,
the right-hand side of the termination line of the algorithm must be
\begin{align}
\label{adbc}
(ux+v,s),~\text{for some}~u \in \Z, s\ne 0.
\end{align}

Using Lemma \ref{period} we see that the sequence
$$(u+v,s),(2u+v,s),\ldots$$ has maximum period $s$. So it will suffice to show that for some $x \in \Z$
we have $(xu+v,s)=1$ for then
$$\liminf_{N \rightarrow \infty} \frac{1}{N}\big|\{x : (f(x),g(x))=1, x=-N..N\}\big|\ge \frac{1}{s}>0.$$

If $(u,s)=1$ then $u^{-1}$ exists modulo $s$. Letting $x=u^{-1}(1-v)$ we obtain $xu+v\equiv 1 \pmod s$ and so, for this value of $x$, we have $(xu+v,s)=1$. So we may assume that $(u,s)=p \ne 1$. If $(u,v)=1$ then, by Dirichlet's theorem on arithmetic progressions \cite{Dir}, there are an infinite number of primes in the arithmetic sequence $\{xu+v\}_{x \in \Z}$. So there must exist a value of $x$ such that $xu+v$ is prime and greater than $s$. It then follows, that for this value of $x$, we have $(xu+v,s)=1$. So we may assume that $(u,v)=q\ne 1$.
Since $q$ divides both $u$ and $v$ we have
$$(xu+v,s)=(q(xuq^{-1}+vq^{-1}),s).$$
Since $(uq^{-1},vq^{-1})=1$ we conclude, using Dirichlet's theorem again, that there must be a value of $x$, denoted $x'$, such that $x'uq^{-1}+vq^{-1}$ is a prime greater than $s$. Letting $r$ be this prime number we have
$$(x'u+v,s)=(qr,s),$$ with $(q,r)=1$.
Since $(a,b,c,d)=1$ we have, using Lemma \ref{(a,b,c,d)}, that $((a,c),(b,d))=1$. We recall, from Lemma \ref{u=(a,c)}, that $(a,c)=u$ and $(b,d)=(v,s)$. Thus $(u,(v,s))=1$. Clearly $(q,s)=1$, for otherwise we would have, using an argument similar to Lemma \ref{(a,b,c,d)},  that $((u,v),s)=(u,(v,s))\ne 1$. Also, $r$ is a prime greater than $s$. So $(r,s)=1$. As $(q,s)=1$ and $(r,s)=1$ it follows that $(qr,s)=1$. So we have
$$(x'u+v,s)=(qr,s)=1,$$ which completes the proof.


\section*{Comments}
There are two lines of enquiry that naturally follow from Theorem \ref{main}.
Firstly, suppose we have (not necessarily linear) integer coefficient  polynomials $f$ and $g$. What are necessary and sufficient coefficient conditions such that
$$\liminf_{N \rightarrow \infty} \frac{1}{N}\big|\{x : (f(x),g(x))=1, x=-N..N\}\big| >0?$$

Secondly, suppose we have linear integer coefficient polynomials, $f_1,\ldots,f_n$.  What are necessary and sufficient coefficient conditions such that
$$\liminf_{N \rightarrow \infty} \frac{1}{N}\big|\{x : (f_1(x),\ldots,f_n(x))=1, x=-N..N\}\big| >0?$$

\section*{Acknowledgements}
The author thanks Adrian Dudek for posing the question that resulted in Theorem \ref{main}, in a project unrelated to \cite{Kno}.
The author thanks Gerry Myerson for some views on results in this area and Thomas Britz for some useful comments.

\makeatletter
\renewcommand{\@biblabel}[1]{[#1]\hfill}
\makeatother

\end{document}